\newtheorem{theorem}{Theorem}[section]
\newtheorem{corollary}[theorem]{Corollary}
\newtheorem{lemma}[theorem]{Lemma}
\newtheorem{proposition}[theorem]{Proposition}
\theoremstyle{definition}
\newtheorem{remark}[theorem]{Remark}
\def\Er{{\mathbb E}}
\def\Pr{{\mathbb P}}
\def\Qr{{\mathbb Q}}
\def\Rr{{\mathbb R}}
\def\Ac{{\mathcal{A}}}
\def\Bc{{\mathcal{B}}}
\def\Ec{{\mathcal{E}}}
\def\Fc{{\mathcal{F}}}
\def\Hc{{\mathcal{H}}}
\def\sign{\operatorname{sign}}
\def\({\left(}     
\def\){\right)}    
\def\[{\left[}     
\def\]{\right]}
\def\as{{\frenchspacing a.s.}~}
\def\trace{{\text { }Trace}}
\keywords{martingales}
\thanks{I thank Chitro Majumdar from RSRL (R-square RiskLab), UAE for valuable discussions and comments}
\begin{document}
\title[Martingales with independent increments]{Martingales with Independent Increments}
\subjclass{Primary: 60G35; Secondary: 68T99, 93E11, 94A99.}
\author{Freddy Delbaen}
\address{Departement f\"ur Mathematik, ETH Z\"urich, R\"{a}mistrasse
101, 8092 Z\"{u}rich, Switzerland}
\address{Institut f\"ur Mathematik,
Universit\"at Z\"urich, Winterthurerstrasse 190,
8057 Z\"urich, Switzerland}
  \email{delbaen@math.ethz.ch}
\begin{abstract}
We show that a discrete time martingale with respect to a filtration with atomless innovations is the (infinite) sum of martingales with independent increments. For the continuous time filtration coming from  Brownian Motion filtration, we show that every finite dimensional $L^2$ martingale is the sum of a series of Gaussian martingales.
\end{abstract}

\maketitle
\section{$L^2$-Martingales}
We use standard probabilistic notation, $(\Omega,(\Fc_k)_{k\ge 0},{\mathbb P})$ is a probability space  equipped with a discrete time filtration. For convenience we suppose that $\Fc_0$ is the trivial sigma algebra. All norms -- except when otherwise stated, are meant to be the $L^2$ norm.  The norm on an $\Rr^m$ space is the Euclidean norm and is denoted by $|\, .\,  |$. The scalar product between elements $x,y\in\Rr^m$ is denoted by $x\cdot y$. We suppose that for each $k$ the innovation is sufficiently large to allow independent random variables that have a continuous distribution.  More precisely we suppose that for each $k\ge 1$, there is a $[0,1]$ uniformly distributed random variable $U_k$ that is independent of $\Fc_{k-1}$ and is $\Fc_k-$measurable.  It is shown in \cite{D3}, see also \cite {D2}, that this is equivalent to the property: for each $k$, $\Fc_k$ is atomless conditional to $\Fc_{k-1}$.  By $(X_k)_{k\ge 0}, X_0=0$ we denote an $\Rr^m-$valued $L^2$ martingale. In other words $X_k\in L^2(\Fc_k)$ and $\Er[X_k\mid\Fc_{k-1}]=X_{k-1}$. The aim of this short note is to prove
\begin{theorem} There is a sequence of martingales $Z^n=(Z_k^n)_{k\ge 1}$ such that for each $n$ we have $Z^n_0=0$ and such that $X_k=\sum_n Z^n_k$ where
\begin{enumerate} \item Each $Z^n_k-Z^n_{k-1}$ is independent of $\Fc_{k-1}$.
\item For each $k\ge 1$ the sum $\sum_n Z^n_k$ converges in $L^2$.
\item $\Vert X_k\Vert^2=\sum_n \Vert Z^n_k\Vert^2$.
\end{enumerate}
The martingales $Z^n$ have independent increments, for each $n$ the differences $(Z^n_k-Z^n_{k-1})_{k\ge 1}$ form an  independent system.
\end{theorem}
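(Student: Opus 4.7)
The plan is to build an orthonormal basis of $L^2(\Fc_\infty)$ out of $\pm1$-valued Walsh products, each of which enjoys the required independence automatically, and then to declare each basis vector (scaled by its Fourier coefficient) as its own one-step martingale.

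First I would invoke the conditional-atomless hypothesis iteratively to produce uniform $[0,1]$-valued variables $V_k$, each independent of $\Fc_{k-1}$, such that $\Fc_k=\Fc_{k-1}\vee\sigma(V_k)$ modulo null sets. Binary expansion of each $V_k$ then yields i.i.d.\ Rademacher variables $\ep_{k,i}\in\{-1,+1\}$ with $\Fc_k=\sigma(\ep_{j,i}:j\le k,\,i\ge1)$. The Walsh products
\begin{equation*}
W_S:=\prod_{(j,i)\in S}\ep_{j,i},\qquad S\subset\Nr\times\Nr\text{ finite},
\end{equation*}
are an orthonormal basis of $L^2(\Fc_\infty)$, and those with $j_*(S):=\max\{j:(j,i)\in S\}\le k$ span $L^2(\Fc_k)$. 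Hence $X_k=\sum c_S W_S$, and the martingale identity forces each coefficient $c_S$ to be the same scalar for every $k\ge j_*(S)$.

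The key observation is that for every non-empty finite $S$ the Walsh product $W_S$ itself has the Rademacher law, has mean zero, and is independent of $\Fc_{j_*(S)-1}$. Indeed, splitting $S=S^-\sqcup S^{j_*}$ at its maximal level, one has $W_S=W_{S^-}\cdot W_{S^{j_*}}$ with $W_{S^-}\in\{-1,+1\}$ being $\Fc_{j_*-1}$-measurable and $W_{S^{j_*}}$ a non-empty product of level-$j_*$ Rademachers, hence itself Rademacher and independent of $\Fc_{j_*-1}$; multiplication by an $\Fc_{j_*-1}$-measurable sign leaves the Rademacher law unchanged, so the conditional law of $W_S$ given $\Fc_{j_*-1}$ agrees with its unconditional law. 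Given this, fix any enumeration $S_1,S_2,\ldots$ of the non-empty finite subsets and set
\begin{equation*}
Z^n_k:=c_{S_n}\, W_{S_n}\,\one_{\{k\ge j_*(S_n)\}}.
\end{equation*}
Each $Z^n$ has at most one non-zero increment, namely $c_{S_n}W_{S_n}$ at time $j_*(S_n)$; by the observation this increment is independent of $\Fc_{j_*(S_n)-1}$, which is property (1), and the increments of $Z^n$ trivially form an independent system since only one of them is non-zero. Parseval's identity then supplies $\|X_k\|^2=\sum_{j_*(S)\le k}c_S^2=\sum_n\|Z^n_k\|^2$, from which both the $L^2$-convergence in (2) and the norm identity in (3) follow.

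All of this is routine once the first step is in hand; the only genuinely non-trivial input is the reduction $\Fc_k=\Fc_{k-1}\vee\sigma(V_k)$. The $U_k$ postulated in the introduction is only required to be independent of $\Fc_{k-1}$, not to generate $\Fc_k$ over it, so one must invoke the full conditional-atomlessness hypothesis --- via \cite{D3} or a standard-Borel lifting of conditional distributions --- to obtain a $V_k$ that actually generates. Without this, the Walsh basis built above need not span $L^2(\Fc_k)$, and one would have to enlarge the Rademacher family before proceeding.
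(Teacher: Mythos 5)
There is a genuine gap, and you have located it yourself but not closed it: the reduction $\Fc_k=\Fc_{k-1}\vee\sigma(V_k)$ with $V_k$ uniform and independent of $\Fc_{k-1}$ is \emph{not} a consequence of the paper's hypothesis, and in the stated generality it is false. The hypothesis (and the equivalence from \cite{D3}) only provides a uniform variable that is independent of $\Fc_{k-1}$; it says nothing about that variable \emph{generating} $\Fc_k$ over $\Fc_{k-1}$. If $\Fc_k$ is not countably generated modulo $\Fc_{k-1}$ and null sets --- take, say, $\Fc_0$ trivial and $\Fc_1$ the product $\sigma$-algebra of uncountably many independent coins, so that $L^2(\Fc_1)$ is non-separable while the hypothesis is still satisfied --- then no single real random variable $V_1$ can generate $\Fc_1$, and your Walsh system does not span $L^2(\Fc_k)$. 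Even after restricting to a countably generated sub-filtration carrying $X$ and the $U_k$, the existence of an independent \emph{complement} (a generating $V_k$ that is moreover independent of all of $\Fc_{k-1}$, not merely of the reduced $\sigma$-algebra) is a non-trivial Rokhlin-type structure theorem that you assert rather than prove; and you cannot dodge it by asking the $V_j$ only to capture the randomness of $X$: a constant-coefficient expansion $X_k=\sum_S c_S W_S$ forces the conditional law of $X_k$ given $\Fc_{k-1}$ to be $\sigma(V_1,\dots,V_{k-1})$-measurable, which already fails for examples as simple as $X_2-X_1=\xi'\eta$ with $\xi'\in\Fc_1$ not captured by $V_1$ and $\eta$ independent of $\Fc_1$. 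Property (1) requires independence of the full $\Fc_{k-1}$, so this cannot be negotiated away.

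The remainder of your argument is correct and, in the special case where the filtration \emph{is} generated by a sequence of independent uniforms, it yields a clean and arguably sharper proof: each $W_S$ with top level $j_*(S)=k$ is indeed a fair sign independent of $\Fc_{k-1}$ (your splitting $W_S=W_{S^-}W_{S^{j_*}}$ is the right argument), the one-increment martingales $Z^n_k=c_{S_n}W_{S_n}\one_{\{k\ge j_*(S_n)\}}$ satisfy (1)--(3) by Parseval, and the independence of the increments of each $Z^n$ is trivial. But the paper's route deliberately avoids any generation statement: it applies, increment by increment, the best-independent-approximation theorem of \cite{D5} to $\xi=X_k-X_{k-1}$ with $\Ac=\Fc_{k-1}$, producing summands $Y^n_k$ that are independent of $\Fc_{k-1}$ without ever representing $\Fc_k$ as $\Fc_{k-1}\vee\sigma(V_k)$; independence of the increments of $Z^n$ is then checked with characteristic functions. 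That is exactly the step your proof is missing, and it is the only step that is not routine.
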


The basis of the proof is the following result we showed in \cite{D5}.
\begin{theorem} Suppose that $(\Omega,\Fc,{\mathbb P})$ is a probability space and that $\Ac$ is a sub sigma-algebra of $\Fc$.  Suppose that $\Fc$ is atomless conditionally to $\Ac$.  Let $\xi\in L^2(\Rr^m)$ satisfy $\Er[\xi\mid \Ac]=0$ and put $\xi_1=\xi$,  inductively $\eta_n\colon\Omega\rightarrow \Rr^m$ is independent of $\Ac$ and is the best $L^2$ approximation of $\xi_n$, i.e.
$$
\Vert \xi_n-\eta_n\Vert =\inf\{\Vert \xi_n-\zeta\Vert\mid \zeta\text{ is independent of }\Ac\},
$$
$\xi_{n+1}=\xi_n-\eta_n$. 
\begin{enumerate}
\item $\eta_n=\Er[\xi_n\mid \eta_n]$.
\item For each $n$: $\Vert \xi_1\Vert^2= \Vert \xi_{n+1}\Vert^2 +\Vert \eta_1\Vert^2+\ldots +\Vert \eta_n\Vert^2$\\
hence $\Vert \eta_1+\ldots+\eta_n\Vert\le \Vert \xi\Vert +\Vert \xi_{n+1}\Vert\le 2\Vert \xi\Vert$
\item $\Vert \eta_n\Vert \ge \frac{1}{2m}\Vert \xi_n\Vert_1$ (we need the $L^1-$norm).
\item $\xi_n\rightarrow 0$ in $L^2$, consequently $\xi_1=\sum_{n\ge 1} \eta_n$ in $L^2$ and $\Vert \xi\Vert^2=\sum_k\Vert \eta_k\Vert^2$.
\item For each $n$: $\xi_n=\sum_{k\ge n}\eta_k$ and $\Vert \xi_n\Vert^2=\sum_{k\ge n}\Vert \eta_k\Vert^2$.
\end{enumerate}
\end{theorem}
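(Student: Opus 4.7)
The plan is to first characterize $\eta_n$ variationally, then derive (2) as a Pythagoras identity, establish (3) by exhibiting an explicit independent-of-$\Ac$ competitor, and finally combine (2) and (3) with a uniform integrability argument to obtain (4) and (5). The set $\Ic$ of $L^2$ random variables independent of $\Ac$ is $L^2$-closed (a limit in $L^2$ of $\Ac$-independent variables is $\Ac$-independent), so a minimizer $\eta_n$ exists via a minimizing sequence combined with the atomless hypothesis, which ensures $\Ic$ is rich enough that the infimum is realized. For (1), observe that $\Er[\xi_n\mid\eta_n]$ is $\sigma(\eta_n)$-measurable, hence independent of $\Ac$ and thus in $\Ic$; as the $L^2$-projection of $\xi_n$ onto $L^2(\sigma(\eta_n))$, it satisfies $\Vert\xi_n-\Er[\xi_n\mid\eta_n]\Vert\le\Vert\xi_n-\eta_n\Vert$. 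Optimality of $\eta_n$ forces equality, and uniqueness of the Hilbert projection yields $\eta_n=\Er[\xi_n\mid\eta_n]$. Then (2) is immediate: $\Er[\xi_n\cdot\eta_n]=\Er[\eta_n\cdot\eta_n]=\Vert\eta_n\Vert^2$, so $\Vert\xi_{n+1}\Vert^2=\Vert\xi_n\Vert^2-\Vert\eta_n\Vert^2$; iterating and using $\Vert\eta_1+\cdots+\eta_n\Vert=\Vert\xi_1-\xi_{n+1}\Vert$ with the triangle inequality and $\Vert\xi_{n+1}\Vert\le\Vert\xi_1\Vert$ gives the stated bound.

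\emph{Property (3), the main obstacle.} Since $|\xi_n|\le\sum_j|\xi_n^j|$, some coordinate $i$ satisfies $\Er|\xi_n^i|\ge\frac{1}{m}\Vert\xi_n\Vert_1$. I would exhibit an explicit $\eta'\in\Ic$ proportional to $e_i$ for which $\Vert\xi_n-\eta'\Vert^2\le\Vert\xi_n\Vert^2-\frac{1}{4m^2}\Vert\xi_n\Vert_1^2$; combined with (2) this forces $\Vert\eta_n\Vert^2\ge\frac{1}{4m^2}\Vert\xi_n\Vert_1^2$. The construction is the technical core: using a uniform $U$ independent of $\Ac$ (available by atomlessness), one builds a $\{-1,+1\}$-valued $\tau$ which is independent of $\Ac$ yet positively correlated with $\sigma=\sign(\xi_n^i)$ in the sense $\Er[\tau\cdot\xi_n^i]\ge c\,\Er|\xi_n^i|$ for some absolute constant $c$; optimising the scalar $a$ in $\eta'=a\tau e_i$ then gives the required reduction of $L^2$-norm. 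Because $\sigma$ itself is generally not $\Ac$-independent (so one cannot take $\tau=\sigma$), and because a pure Rosenblatt-quantile construction based only on the conditional law of $\xi_n^i$ given $\Ac$ loses the correlation when that conditional law varies strongly with $\omega$, the construction must combine an $\Ac$-measurable sign with an auxiliary symmetric independent sign coupled to the conditional quantile structure of $\xi_n^i$. Engineering simultaneously the $\Ac$-independence of $\tau$ and its alignment with $\xi_n^i$ is exactly where the atomless hypothesis is genuinely used, and tracking the constant through the construction produces the factor $\frac{1}{2m}$.

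\emph{Properties (4) and (5).} From (2), $\sum\Vert\eta_n\Vert^2\le\Vert\xi_1\Vert^2<\infty$, so $\Vert\eta_n\Vert\to 0$, and by (3) $\Vert\xi_n\Vert_1\to 0$; in particular $\xi_n\to 0$ in probability. The sequence $\Vert\xi_n\Vert^2$ is monotonically decreasing to some $L\ge 0$, and to rule out $L>0$ I would establish uniform integrability of $\{|\xi_n|^2\}$ via the conditional identity $\Er[|\xi_n|^2\mid\eta_n]=\Er[|\xi_{n+1}|^2\mid\eta_n]+|\eta_n|^2$, which follows from (1) since $\Er[\xi_{n+1}\mid\eta_n]=0$, together with $L^2$-summability of $(\eta_n)$. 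Convergence in probability plus uniform integrability then upgrades to $\xi_n\to 0$ in $L^2$, giving (4); passing to the limit in the identities of (2) yields $\xi_1=\sum_n\eta_n$ and $\Vert\xi_1\Vert^2=\sum_n\Vert\eta_n\Vert^2$. Property (5) is the same argument applied to the tail sequence $(\xi_k)_{k\ge n}$, for which the recursion and bounds (1)--(4) are inherited unchanged.
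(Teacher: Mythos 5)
First, a remark on the comparison you asked for: the paper does not prove this theorem at all --- it is quoted as a black box from the reference [D5] --- so there is no in-paper proof to measure your argument against, and I can only assess it on its own terms. Your treatment of (1) and (2) is correct: $\Er[\xi_n\mid\eta_n]$ lies in the closed subspace $L^2(\sigma(\eta_n))$, which is contained in the set of $\Ac$-independent variables, so optimality of $\eta_n$ plus uniqueness of the projection onto that subspace forces $\eta_n=\Er[\xi_n\mid\eta_n]$, and the Pythagoras identity follows from $\Er[\xi_n\cdot\eta_n]=\Vert\eta_n\Vert^2$. Two caveats, though. The existence of $\eta_n$ is not ``a minimizing sequence plus richness'': the constraint set is $L^2$-closed but \emph{not convex} (a convex combination of two $\Ac$-independent variables need not be $\Ac$-independent), so a minimizing sequence need not converge, and existence is itself a nontrivial result of [D5]. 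And for (3), your reduction --- exhibit $\eta'=a\tau e_i$ with $\tau\in\{-1,+1\}$ independent of $\Ac$ and $\Er[\tau\,\xi_n^i]\ge\tfrac12\Er|\xi_n^i|$, then optimize $a$ --- is the right plan and does yield the constant $\tfrac{1}{2m}$ (the natural $\tau$ is the sign of $\xi_n^i$ minus its conditional median given $\Ac$, with ties randomized by an auxiliary uniform, using $\Er[|\psi-q|\,]\ge\tfrac12\Er[|\psi|\,]$ for a centered $\psi$ with median $q$); but you only gesture at this construction, and it is the technical heart of the statement.

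The genuine gap is in (4). From (2) and (3) you correctly obtain $\Vert\eta_n\Vert\to0$ and hence $\Vert\xi_n\Vert_1\to0$, but upgrading $L^1$-convergence to $L^2$-convergence is exactly the hard part, and your mechanism does not close it. The identity $\Er[|\xi_n|^2\mid\eta_n]=\Er[|\xi_{n+1}|^2\mid\eta_n]+|\eta_n|^2$ is true, but it only controls $\Er[|\xi_{n+1}|^2\one_A]$ for $A\in\sigma(\eta_n)$, whereas uniform integrability of $\{|\xi_n|^2\}$ requires controlling $\Er[|\xi_{n+1}|^2\one_{\{|\xi_{n+1}|>c\}}]$, and $\{|\xi_{n+1}|>c\}$ is not $\sigma(\eta_n)$-measurable; moreover the conditioning sigma-algebra changes with $n$, so these inequalities do not chain into domination by a single integrable random variable. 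Nor can you get convergence of $\sum_k\eta_k$ directly from $\sum_k\Vert\eta_k\Vert^2<\infty$, since the $\eta_k$ are not shown to be orthogonal, and indeed $\sum_{k\le n}\eta_k=\xi_1-\xi_{n+1}$, so that route is circular. As written, (4) --- and with it the convergence claims in (5) --- remains unproved: you need either an actual proof of uniform integrability of $\{|\xi_n|^2\}$ or a strengthened version of (3) that bounds $\Vert\eta_n\Vert$ from below in terms of $\Vert\xi_n\Vert_2$ rather than $\Vert\xi_n\Vert_1$.
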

\noindent
We are now ready to prove the main theorem.
\begin{proof}  As shown in \cite{D5} for each $k$ there is a sequence $Y^n_k$ such that $Y^n_k$ is independent of $\Fc_{k-1}$, is $\Fc_k$ measurable, $\Er[Y^n_k\mid \Fc_{k-1}]=0$ and $X_k-X_{k-1}=\sum_n Y^n_k$.  The sum converges in $L^2$ and $\Vert X_k-X_{k-1}\Vert^2=\sum_n \Vert Y^n_k\Vert^2$.  We put
$$
Z^n_k=\sum_{s=1}^{s=k} Y^n_s.
$$
It is easily seen that each $Z^n$ defines an $L^2$ martingale.  For each $k$ we have $X_k=\sum_{s=1}^{s=k} (X_s-X_{s-1}) = \sum_{s=1}^{s=k} (\sum_n  Y^n_s)$. The  sums can be permuted and hence we get
$$
X_k=\sum_{s=1}^{s=k} (X_s-X_{s-1}) = \sum_n  \sum_{s=1}^{s=k} Y^n_s=\sum_n Z^n_k.
$$
For the $L^2$ norms we get:

\begin{eqnarray*}
\Vert X_k\Vert^2 &=&\sum_{s=1}^{s=k} \Vert (X_s-X_{s-1})\Vert^2 \\
&=&  \sum_{s=1}^{s=k}  \sum_n \Vert Y^n_s\Vert^2= \sum_n  \sum_{s=1}^{s=k} \Vert Y^n_s\Vert^2 =\sum_n \Vert Z^n_k\Vert^2 .
\end{eqnarray*}
That for each $n$, the random variables $Y^n_k$ form an independent system follows for instance from a calculation with characteristic functions. To see this let us fix $k$ and take $u_1,\ldots, u_k\in \Rr^m$. We will calculate
$$
\varphi(u_1,\ldots,u_k)=\Er\left[\exp\left(\sum_{s\le k} u_s\cdot Y^n_s\right)\right]
$$
by using successive conditional expectations. Clearly
\begin{eqnarray*}
\Er\left[\exp\left(\sum_{s\le k} u_s\cdot Y^n_s\right)\right]&=&\Er\left[\Er\left[\exp\left(\sum_{s\le k} u_s\cdot Y^n_s\right)\mid \Fc_{k-1}\right]\right]\\
&=&\Er\left[\exp\left(\sum_{s\le k-1} u_s\cdot Y^n_s\right)\right]\Er\left[\exp\left(u_k\cdot Y^n_k\right)\right]\\
& &\text{since $Y^n_k$ is independent of $\Fc_{k-1}$}\\
&=&\ldots\\
&=&\Er\left[\exp\left(u_1\cdot Y^n_1\right)\right]\ldots \Er\left[\exp\left(u_k\cdot Y^n_k\right)\right],
\end{eqnarray*}
proving independence.
\end{proof}

\section{Closed $L^2-$Martingales}

In this section we analyse the results of the previous section for closed martingales. We use the same hypothesis on the filtration $\Fc$ and we suppose that the $\Rr^m$ valued martingale $(X_k)_{k\ge 1}$ is bounded, i.e. $\sup_k \Vert X_k\Vert <\infty$.  In that case there is a random variable $X_\infty$ such that $X_k\rightarrow X_\infty$ in $L^2$ and almost surely.  Of course $X_k=\Er[ X_\infty\mid \Fc_k]$.  For simplicity and to avoid trivialities we again suppose that $X_0=\Er[X_\infty]=0$ and  $\Fc_0$ is the trivial sigma algebra. The random variables $Y^n_k, Z^n_k$ have the same meaning as in the previous section.

First we observe that the martingales $Z^n$ are all bounded in $L^2$.   This is immediate since
$$
\Vert Z^n_k\Vert^2=\sum_{s\le k}\Vert Y^n_s\Vert^2\le \sum_{s\le k}\Vert X_s-X_{s-1}\Vert^2= \Vert X_k\Vert^2.
$$
Each martingale $Z^n_k$ therefore converges in $L^2$ to a final value $Z^n_\infty$.
\begin{theorem} With the notation introduced above
$$
X_\infty =\sum_n Z^n_\infty,
$$
in $L^2$ and $\Vert X_\infty\Vert^2=\sum_n\Vert Z^n_\infty\Vert^2$.
\end{theorem}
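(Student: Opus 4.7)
The plan is to establish the $L^2$ norm identity first and use it, together with orthogonality of the $Z^n_\infty$, to obtain both claims.

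\textbf{Step 1: Norm identity via monotone convergence.} For each fixed $n$, the martingale $Z^n$ satisfies $\|Z^n_k\|^2 = \sum_{s\le k}\|Y^n_s\|^2$, which is non-decreasing in $k$, and its $L^2$-limit gives $\|Z^n_k\|^2 \uparrow \|Z^n_\infty\|^2$. On the other hand, from Section 1 we have $\|X_k\|^2 = \sum_n \|Z^n_k\|^2$ for every $k$, and $\|X_k\|^2 \to \|X_\infty\|^2$ since $X_k\to X_\infty$ in $L^2$. Since the summands $\|Z^n_k\|^2$ are monotone in $k$, I can interchange limit and sum (monotone convergence on the counting measure on $\Nr$) to obtain
$$
\|X_\infty\|^2 = \lim_k \sum_n \|Z^n_k\|^2 = \sum_n \|Z^n_\infty\|^2.
$$

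\textbf{Step 2: Orthogonality of the $Z^n_\infty$.} The differences $Y^n_s$ are pairwise orthogonal across $(n,s)$: for $s\neq s'$ this follows from the martingale difference property after conditioning on the earlier sigma-algebra, and for $s=s'$ it is already built into Theorem 1.2(2) (expanding $\|\sum_{j\le N}\eta_j\|^2$ and comparing). Writing $Z^n_\infty = \sum_{s\ge 1}Y^n_s$ (a convergent series in $L^2$, by boundedness), bilinearity of the inner product yields $\Er[Z^n_\infty\cdot Z^{n'}_\infty]=0$ for $n\neq n'$.

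\textbf{Step 3: $L^2$-convergence of $\sum_n Z^n_\infty$.} By Steps 1 and 2, the partial sums $S_N=\sum_{n=1}^N Z^n_\infty$ are Cauchy in $L^2$, with $\|S_N-S_M\|^2=\sum_{n=M+1}^{N}\|Z^n_\infty\|^2\to 0$. Call the limit $S\in L^2(\Fc_\infty)$.

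\textbf{Step 4: Identification $S=X_\infty$.} For every fixed $k$ and $n$, $Z^n_k=\Er[Z^n_\infty\mid\Fc_k]$ since $Z^n$ is a closed $L^2$-martingale, so
$$
\Er[S_N\mid\Fc_k] = \sum_{n=1}^N Z^n_k.
$$
Taking $N\to\infty$, the left-hand side converges in $L^2$ to $\Er[S\mid\Fc_k]$ (conditional expectation is $L^2$-continuous), while the right-hand side converges in $L^2$ to $X_k$ by the result of Section 1. Hence $\Er[S\mid\Fc_k]=X_k=\Er[X_\infty\mid\Fc_k]$ for every $k$. Since $X_\infty-S\in L^2(\Fc_\infty)$ with $\Fc_\infty=\sigma(\bigcup_k\Fc_k)$ and its conditional expectation onto every $\Fc_k$ vanishes, the $L^2$-martingale convergence theorem forces $X_\infty=S$ almost surely.

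The only mildly delicate point is the interchange of $\lim_k$ and $\sum_n$ in Step 1, which is why the monotonicity observation $\|Z^n_k\|^2\uparrow\|Z^n_\infty\|^2$ is essential; once that is in hand, Steps 2--4 are assembly from the orthogonality and closedness of the $Z^n$.
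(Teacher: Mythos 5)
Your Step 1 is correct and is a clean alternative to the paper's computation: the paper obtains $\Vert X_\infty\Vert^2=\sum_n\Vert Z^n_\infty\Vert^2$ by writing $\Vert X_\infty\Vert^2=\sum_k\sum_n\Vert Y^n_k\Vert^2$ and interchanging the two sums of nonnegative terms, which is the same interchange you perform via monotone convergence. Step 4 is also fine. The gap is in Step 2, and it is load-bearing because Step 3 collapses without it. You need $\Er[Y^n_s\cdot Y^{n'}_s]=0$ for $n\ne n'$ at the \emph{same} time $s$, and you assert that this is ``built into Theorem 1.2(2).'' It is not. Writing $\eta_n$ for the pieces and $\xi_{n+1}=\xi_n-\eta_n$ for the residuals, the only orthogonality that follows from item (1) (namely $\eta_n=\Er[\xi_n\mid\eta_n]$, hence $\Er[\xi_{n+1}\mid\eta_n]=0$) is $\eta_n\perp\xi_{n+1}=\sum_{k>n}\eta_k$; telescoping this is exactly what produces the Pythagorean identity in item (2). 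That identity is a single scalar relation per $n$ and cannot yield the pairwise relations $\Er[\eta_i\cdot\eta_j]=0$. Nor does ``expanding $\Vert\sum_{j\le N}\eta_j\Vert^2$ and comparing'' help: at best such an expansion controls the \emph{sum} of all cross terms, and even that requires $\sum_{j\le N}\eta_j\perp\xi_{N+1}$, which is again not among the stated properties. The set of random variables independent of $\Ac$ is not a linear subspace, so the best-approximation property gives no first-order orthogonality conditions beyond item (1). Without pairwise orthogonality, $\Vert S_N-S_M\Vert^2=\sum_{n=M+1}^N\Vert Z^n_\infty\Vert^2$ is unjustified and your partial sums are not known to be Cauchy.

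The paper's proof is structured precisely to avoid any orthogonality across $n$: it fixes $\varepsilon>0$, picks $k_0$ with $\Vert X_\infty-X_{k_0}\Vert\le\varepsilon$, and controls the tails $\Vert\sum_{n\le N}(Z^n_\infty-Z^n_{k_0})\Vert^2=\sum_{s>k_0}\Vert\sum_{n\le N}Y^n_s\Vert^2$ uniformly in $N$ using only orthogonality across $s$ (martingale differences) together with the triangle-inequality bound $\Vert\eta_1+\dots+\eta_N\Vert\le 2\Vert\xi\Vert$ from Theorem 1.2(2); an $\varepsilon/3$ splitting then finishes. If you want to keep the flavor of your argument, a repair that needs no cross-$n$ orthogonality is to note that $X_k-\sum_{n\le N}Z^n_k=\sum_{s\le k}\xi^s_{N+1}$ is a sum of martingale differences (where $\xi^s_{N+1}$ is the $(N+1)$-st residual at step $s$), so $\Vert X_\infty-\sum_{n\le N}Z^n_\infty\Vert^2=\sum_s\Vert\xi^s_{N+1}\Vert^2=\Vert X_\infty\Vert^2-\sum_{n\le N}\Vert Z^n_\infty\Vert^2$, which tends to $0$ by your Step 1.
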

\begin{proof}   We start by proving the equality $\Vert X_\infty\Vert^2=\sum_n\Vert Z^n_\infty\Vert^2$.
\begin{eqnarray*}
\Vert X_\infty\Vert^2 &=& \sum_{k\ge 1} \Vert X_k-X_{k-1}\Vert^2\\
&=& \sum_{k\ge 1} \sum_n \Vert Y^n_k\Vert^2\\
&=& \sum_{n} \sum_{k\ge 1} \Vert Y^n_k\Vert^2\\
&=& \sum_k \Vert Z_\infty^n\Vert^2.
\end{eqnarray*}
To prove convergence we proceed in the usual way.  We take $\varepsilon>0$. From the convergence $X_k\rightarrow X_\infty$ in $L^2$, we deduce that there is $k_0$ such that for  all $k\ge k_0$:
$$
\Vert X_\infty-X_k\Vert^2 \le \varepsilon^2
$$
Hence also for all $k\ge k_0$ and all $N$:
\begin{eqnarray*}
\Vert Z^1_\infty&+&\dots+Z^N_\infty-(Z^1_k+\dots+Z^N_k)\Vert^2\\
&=& \sum_{s>k}\Vert Y_s^1+\ldots+Y_s^N\Vert^2\\
&\le&4\sum_{s>k}\Vert X_s-X_{s-1}\Vert^2\\
&\le&4\,\Vert X_\infty-X_k\Vert^2\le 4\varepsilon^2.
\end{eqnarray*}
Now we choose $N_0$ such that 
$$
\Vert X_{k_0}-(Z^1_{k_0}+\ldots+Z^N_{k_0})\Vert\le \varepsilon,
$$
for all $N\ge N_0$. The usual splitting then gives for all $N\ge N_0$:
\begin{eqnarray*}
\Vert X_\infty&-&(Z^1_\infty+\ldots+Z^N_\infty\Vert\\
&\le&\Vert X_\infty -X_{k_0}\Vert +\Vert X_{k_0}-\left(Z^1_{k_0}+\ldots+Z^N_{k_0}\right) \Vert\\
& &\quad\quad+ \Vert\left(Z^1_{k_0}+\ldots+Z^N_{k_0}\right)-\left(Z^1_{\infty}+\ldots+Z^N_{\infty}\Vert \right)\\
&\le& 4\varepsilon.
\end{eqnarray*}
\end{proof}

\section{Continuous Time Martingales}

Before we discuss the approximation of  martingales, we first prove a lemma that will serve later on.  The notation is the following $(E,\Ec,\mu)$ is a probability space and $\Hc\subset \Ec$ is a sub sigma-algebra. Expectations in this probability space are denoted by $\Er_\mu$. For $x\in\Rr^m$ we define $\sign(x)=\frac{x}{|x|}$ if $x\neq 0$ and $\sign(0)=(1,0,\ldots,0)$.

\begin{lemma} Let $\xi_1\in L^2(\mu)$ be a square integrable function $\xi\colon E\rightarrow \Rr^m$ and define inductively
$$
\xi_{n+1}=\sign(\xi_n)\(|\xi_n|-\Er_\mu[|\xi_n|\mid\Hc]\)=\xi_n-\sign(\xi_n)\Er_\mu[|\xi_n|\mid\Hc].
$$
We have $\xi_n\rightarrow 0$ in $L^2$. Consequently $\xi_1=\sum_{n\ge 1}\sign(\xi_n)\Er[|\xi_n|\mid \Hc]$ where the sum converges in $L^2$ and $\Vert \xi_1\Vert^2=\sum_n \Vert \Er[ |\xi_n|\mid \Hc]\Vert^2$.
\end{lemma}
\begin{proof} The properties of conditional expectation show that
$$ \Vert \xi_n\Vert^2=
\Vert \Er_\mu [|\xi_n|\mid \Hc]\Vert^2 + \Vert \xi_{n+1}\Vert^2. $$
We can telescope this to yield
$$
\Vert \xi_1\Vert^2=\sum_{k=1}^{k=n} \Vert \Er_\mu[|\xi_k|\mid \Hc]\Vert^2 +\Vert \xi_{n+1}\Vert^2.
$$
This implies $\sum_{k\ge 1} \Vert \Er_\mu[|\xi_k|\mid \Hc]\Vert^2 <\infty$.  From this we get that $\Er_\mu[|\xi_k|\mid \Hc]\rightarrow 0$ in $L^2$ and hence also $\Er_\mu[|\xi_k|]\rightarrow 0$.
Furthermore $|\xi_{n+1}|=|\,|\xi_n|-\Er_\mu[|\xi_n|\mid\Hc]\,|\le \max(|\xi_n|,\Er_\mu[|\xi_n|\mid\Hc])$ which in turn implies
$$
|\xi_{n+1}|^2\le \max(|\xi_n|^2,\Er_\mu[|\xi_n|\mid\Hc]^2)\le |\xi_n|^2 + \Er_\mu[|\xi_n|\mid \Hc]^2.
$$

Telescoping this inequality yields the inequality:
$$
|\xi_{n+1}|^2 \le |\xi_1|^2 + \sum_{k=1}^{k=n}\Er_\mu[|\xi_k|\mid \Hc]^2\le  |\xi_1|^2 + \sum_{k\ge 1}\Er_\mu[|\xi_k|\mid \Hc]^2.
$$
However $\sum_{k\ge 1} \Vert \Er_\mu[|\xi_k|\mid \Hc]\Vert^2 <\infty$ and hence the sequence $|\xi_n|^2$ is uniformly integrable. Since the sequence $(\xi_n)_n$ already converges to $0$ in $L^1$ we get that
it also converges to $0$ in $L^2$. The expression
$$
\Vert \xi_1\Vert^2=\sum_{k=1}^{k=n} \Vert \Er_\mu[|\xi_k|\mid \Hc]\Vert^2 +\Vert \xi_{n+1}\Vert^2.
$$
and $\Vert \xi_{n+1}\Vert\rightarrow 0$ now complete the proof of the last line of the lemma.
\end{proof}

We can now extend the results of the previous sections  to the case of the  one  dimensional Brownian filtration $\Fc$.  To avoid normalising factors and extra time transforms, we restrict the time interval to $[0,1]$. We first recall the structure of martingales with independent increments.
\begin{proposition}  Suppose that $Y$ is a one dimensional $L^2$ martingale so that for each $0\le t<s\le 1$, $Y_s-Y_t$ is independent of $\Fc_t$. In this case there is a deterministic function $0\le f \in L^2[0,1]$ as well as a predictable function $\varphi$, satisfying $|\varphi |=1$ \as on the product space $[0,1]\times \Omega$ such that $dY_t=\varphi_t\, f(t)\,dW_t$. Conversely if the one dimensional martingale $Y$ satisfies $dY_t=\varphi_t\, f(t)\,dW_t$ with a deterministic function $0\le f \in L^2[0,1]$ as well as a predictable function $\varphi$, satisfying $|\varphi |=1$ \as on the product space $[0,1]\times \Omega$, then $Y$ has independent increments and is a Gaussian process.
\end{proposition}
\begin{proof} Without loss of generality we may suppose that $Y_0=0$. Suppose that $dY_t=H_t\,dW_t$ where $H$ is predictable. The Kunita-Watanabe equality then shows that for $0\le f$ and $f(t)^2=\Er[H_t^2]$ we have $\Er[Y_t^2]=\int_0^t f(u)^2\,du$. Because the increments are independent of the past, it is obvious that for $t<s\le1$ and for each $n$, $$\sum_{k=0}^{k=2^n-1}\(Y_{t +(s-t)(k+1)/2^n}-Y_{t +(s-t)(k)/2^n}\)^2$$ is independent of $\Fc_t$. Since these sums converge to $\langle Y,Y\rangle_s-\langle Y,Y\rangle_t$, \as we find that these differences are independent of $\Fc_t$. This implies that 
$\int_0^tH_u^2\,du- \int_0^t \Er[H_u^2]^2\,du=\langle Y,Y\rangle_t- \int_0^t f(u)^2\,du$ is a martingale in a Brownian filtration. This is only possible if it is constantly equal to $0$. This in turn implies that \as on $[0,1]\times \Omega$, $H=\varphi f$ with $\varphi$ predictable and $|\varphi |=1$.  The converse is an obvious calculation using characteristic functions and Ito's formula.
\end{proof}
\begin{remark} The previous result is probably  part of exercises in Brownian Motion theory. The author could not find references and therefore included a proof. The reader can consult the paper by Millar, \cite{Millar}, where besides convergence of the quadratic variation also references are given to earlier results, for instance by Doob.  However these results mention the representation without using the predictable process $\varphi$ and use an alternative Brownian Motion.  The exercises in Revuz-Yor, \cite{RY}, exercise 1.14, page 186 and exercise 1.35, page 133 point in the same direction as the proposition above. The last part of the proposition can already be found in Doob's book, \cite{Doob}, Theorem 5.3, page 449.
\end{remark}
We are now ready to state and prove the main result of this section.
\begin{theorem} Let $X$, $X_0=0$, $0\le t\le 1$ be an $L^2$ martingale with respect to the filtration generated by $1-$dimensional Brownian Motion $W$, then there exists a sequence of Gaussian martingales $Y^n$ of the form $dY^n_t=\varphi_n(t)f_n(t)\,dW_t$ where each $\varphi_n$ is predictable, $|\varphi_n|=1$ and each $f_n\in L^2[0,1]$ is deterministic.  The martingale $X$ is the sum $\sum_n Y^n$ where the sum converges in $L^2$.  Furthermore $\Vert X_1\Vert^2=\sum_n\Vert Y^n_1\Vert^2$.
\end{theorem}
\begin{proof} We represent $X$ by its stochastic integral $dX_t= H_t\,dW_t$ and we regard $H$ as an element of $(E,\Ec,\mu)$ where $E=[0,1]\times \Omega$, $\Ec$ is the sigma algebra of predictable events and $\mu$ is the product measure $m\otimes\Pr$ where $m$ is the Lebesgue measure. The sigma algebra $\Hc$ is the sigma algebra $\Bc\otimes\{\emptyset,\Omega\}$ enlarged with the evanescent sets and where $\Bc$ is as expected the Borel sigma algebra on $[0,1]$. Now we can apply the lemma above and get $H=\sum_n \varphi_nf_n$, the sum being convergent in $L^2\([0,1]\times \Omega\)$ and $\int_{[0,1]\times\Omega} H^2=\sum_n\Vert f_n\Vert^2$. If we take for $Y^n$ the stochastic integral $\int \varphi_n f_n\,dW$ we get the desired sequence.
\end{proof}
\begin{corollary} Let $(\Omega,\Fc,\Pr)$ be an atomless probability space and let $\xi\in L^2$.  In case $\Er[\xi]=0$ there is a sequence of standard normal variables $\psi_n$ and real numbers $a_n$ such that $\xi=\sum_n a_n\psi_n$ where the sum converges in $L^2$ and $\Vert \xi \Vert^2=\sum_n a_n^2$.
\end{corollary}
\begin{proof}  We sketch a ``sledge hammer" proof. Since $\Omega$ is atomless there is a mapping $\alpha\colon \Omega\rightarrow [0,1]$ such that the image measure is the Lebesgue measure and such that $\xi$ is measurable for the atomless sigma algebra generated by $\alpha$. The spaces $[0,1]$ with the Lebesgue measure on the Borel sets and the space $C[0,1]$ with the Borel sigma algebra and the Wiener measure, are isomorphic as probability spaces.  The composition of $\alpha$ and this isomorphism allows to construct a Brownian Motion on $\Omega$ such that $\xi\in L^2(\Fc_1)$.  We can now apply the result of the theorem to the martingale $X_t=\Er[\xi\mid \Fc_t]$ and get the corresponding Gaussian martingales $Y^n$. The series $\sum_n Y^n_1$ converges to $\xi$ in $L^2$ and $\Vert \xi\Vert^2=\sum_n\Vert Y^n_1\Vert^2$.  We now put $\psi_n=\frac{Y^n_1}{\Vert Y^n_1\Vert}$ and $ a_n=\Vert Y^n_1\Vert$.
\end{proof}

\section{A financial Interpretation}

In mathematical finance the gains process $X$ of a portfolio is (under good boundedness conditions) a martingale with respect to a risk neutral measure $\Qr$. In the Samuelson-Black-Scholes model the driving force is a Brownian Motion and the market is complete.  That means that the gains process of the stock allows to represent every martingale and hence also the driving Brownian Motion.  As a result a gains process $X$ of a portfolio can be represented as an $L^2$ sum of gains processes that are Gaussian processes under the risk neutral measure $\Qr$.  This may sound strange since many gains processes are bounded below whereas Gaussian processes are not. Also Gaussian processes are symmetric whereas gains processes in general are not.  The Gaussian processes in the series expansion are not orthogonal and certainly not independent.  The series of such processes  converge in $L^2$ and it is perfectly possible that the partial sums are not in $L^\infty$ but their limit is a bounded random variable. There is a  difference between  this expansion and the results obtained by Carr, Geman, Madan and Yor, \cite{1}. 

\section{Continuous time Martingales in More Dimensions}

This section is an attempt to generalise the results for 1-dimension to more dimensions. Since it uses maybe less known features from linear algebra, we decided to put it in a separate section.  We start with a proposition (without proof) that summarises the topics needed to replicate the ideas of the 1-dimensional case.  The measurability statements can be proved using explicit constructions of the polar decomposition or constructions of the related singular value decomposition. We agree that the details are technical and maybe not available in standard textbooks but including them would overload the paper with non-essential paragraphs.
\begin{proposition}
\begin{enumerate}
\item For each $n\times n$ matrix, $A$, we can find a symmetric positive semi-definite matrix $R$ and an orthogonal matrix $O$ such that $A=RO$.  In case the rank of $A$ equals $n$, the decomposition is unique. The matrix $R$ is the square root of $A\,A^*$, i.e. $A\,A^*=R^2$. We put $R=|A|$ to simplify notation. This decomposition is called the polar decomposition of $A$.\footnote{ There is also a polar decomposition written in the reverse order $A=U T$, where $U$ is orthogonal and $T$ is symmetric positive semi-definite.}
\item There are Borel measurable mappings $\omega,\rho\colon \Rr^{n^2}\rightarrow \Rr^{n^2}\times \Rr^{n^2}$ mapping $A$ to its polar decomposition $A=\rho(A)\omega(A)$.
\item For an $n\times n$ matrix $A=(a_{i,j})_{1\le i\le n,1\le j\le n}$ the Hilbert Schmidt norm of $A$ is $\Vert A\Vert^2=\sum_{i,j} a_{i,j}^2$. If $O$ is orthogonal then $\Vert OA\Vert =\Vert A\Vert=\Vert AO\Vert=\Vert O^*A\,O\Vert$.
\item If $R_1,R_2$ are two symmetric positive semi-definite $n\times n$ matrices then $\Vert R_1-R_2\Vert^2\le \Vert R_1\Vert^2 +\Vert R_2\Vert^2$.
\end{enumerate}
\end{proposition}
\begin{proof}
We will only prove the statement on the norm inequality. Using orthogonal matrices we can diagonalise $R_1$ to get a diagonal matrix $D= O^*\,R_1\,O$. This operation will not necessarily diagonalise $R_2$. But it will not change the Hilbert Schmidt norms. The matrix $B=O^*\,R_2\,O=(b_{i,j})_{i,j}$ is still positive semi-definite and hence has nonnegative elements on the diagonal. Let us now calculate the norm of $D-B$. This gives
$$
\Vert D-B\Vert^2=\sum_i(d_{i,i}-b_{i,i})^2+\sum_{i\neq j} b_{i,j}^2.
$$
Because $d_{i,i},b_{i,i}\ge 0$ we have $|d_{i,i}-b_{i,i}|\le \max(d_{i,i},b_{i,i})$.  Hence we get
\begin{eqnarray*}
\Vert R_1-R_2\Vert^2&=&\Vert D-B\Vert^2\\
 &\le&\sum_i  \max(d_{i,i},b_{i,i})^2+\sum_{i\neq j} b_{i,j}^2\\
 &\le& \sum_i (d_{i,i}^2+b_{i,i}^2)+\sum_{i\neq j} b_{i,j}^2\\
 &\le& \sum_i d_{i,i}^2 +\sum_{i,j}b_{i,j}^2\\
 &\le&\Vert D\Vert ^2+\Vert B\Vert^2=\Vert R_1\Vert^2 +\Vert R_2\Vert^2.
\end{eqnarray*}
\end{proof}
\begin{remark} If $A=RO$ is the polar decomposition then $\Vert A\Vert^2= \trace(A\,A^*)=\trace(R^2)=\Vert R\Vert^2$, or written differently $\Vert A\Vert =\Vert\,|A|\,\Vert$.
\end{remark}
Using the above proposition we can now prove the following. We again use the notation: $(E,\Ec,\mu)$ is a probability space with expectation operator $\Er_\mu$ and $\Hc\subset \Ec$ is a sub sigma-algebra.
\begin{lemma} Let $\xi_1\in L^2(\mu)$ be a square integrable function taking values in the set of $n\times n$ matrices. We inductively define 
$$
\xi_{n+1}=\(|\xi_n|-\Er_\mu[|\xi_n|\mid\Hc]\) \omega(\xi_n)=\xi_n-\Er_\mu[|\xi_n|\mid\Hc]\omega(\xi_n),
$$
where $\omega(\xi_n)$ is the orthogonal matrix in the polar decomposition of $\xi_n$. We have $\xi_n\rightarrow 0$ in $L^2$. Consequently $\xi_1=\sum_{n\ge 1}\Er[|\xi_n|\mid \Hc]\omega(\xi_n)$ where the sum converges in $L^2$ and $\int \Vert \xi_1\Vert^2\,d\mu=\sum_n \int \Vert \Er[ |\xi_n|\mid \Hc]\Vert^2\,d\mu$.
\end{lemma}
\begin{proof} The properties of conditional expectation and the properties of the Hilbert-Schmidt norm show that
$$\int \Vert \xi_n\Vert^2\,d\mu= \int\Vert\,| \xi_n|\,\Vert^2\,d\mu=
\int \Vert \Er_\mu [|\xi_n|\mid \Hc]\Vert^2\,d\mu + \int\Vert \xi_{n+1}\Vert^2\,d\mu. $$
We can telescope this to yield
$$
\int\Vert \xi_1\Vert^2\,d\mu=\sum_{k=1}^{k=n} \int\Vert \Er_\mu[|\xi_k|\mid \Hc]\Vert^2\,d\mu +\int\Vert \xi_{n+1}\Vert^2\,d\mu.
$$
This implies $\sum_{k\ge 1} \int\Vert \Er_\mu[|\xi_k|\mid \Hc]\Vert^2\,d\mu <\infty$.  From this we get that $\Er_\mu[|\xi_k|\mid \Hc]\rightarrow 0$ in $L^2$ and hence also $\Er_\mu[|\xi_k|]\rightarrow 0$.
As we have shown above 
$$
\Vert \xi_{n+1}\Vert^2\le \Vert \xi_n\Vert^2 + \Vert \Er_\mu[|\xi_n|\mid \Hc]\Vert^2.
$$

Telescoping this inequality yields the inequality:
$$
\Vert\xi_{n+1}\Vert^2 \le \Vert \xi_1\Vert^2 + \sum_{k=1}^{k=n}\Vert \Er_\mu[|\xi_k|\mid \Hc]\Vert^2\le \Vert \xi_1\Vert^2 + \sum_{k\ge 1}\Vert \Er_\mu[|\xi_k|\mid \Hc]\Vert^2.
$$
However $\sum_{k\ge 1} \int\Vert \Er_\mu[|\xi_k|\mid \Hc]\Vert^2\,d\mu <\infty$ and hence the sequence $\Vert\xi_n\Vert^2$ is uniformly integrable. Since the sequence $(\xi_n)_n$ already converges to $0$ in $L^1$ we get that
it also converges to $0$ in $L^2$. The expression
$$
\int \Vert \xi_1\Vert^2\,d\mu=\sum_{k=1}^{k=n}\int  \Vert \Er_\mu[|\xi_k|\mid \Hc]\Vert^2\,d\mu +\int \Vert \xi_{n+1}\Vert^2\,d\mu.
$$
and $\int \Vert \xi_{n+1}\Vert^2\,d\mu\rightarrow 0$ now form the proof of the last line of the lemma.
\end{proof}
We are now ready to state and prove that the main result of section 3 also holds for higher dimensions.
\begin{theorem} Let $X$, $X_0=0$, $0\le t\le 1$ be a $d-$dimensional $L^2$ martingale with respect to the filtration generated by the $d-$dimensional Brownian Motion $B$, then there exists a sequence of Gaussian martingales $Y^n$ of the form $dY^n_t=f_n(t)\,O_n(t)\,dB_t$ where each $O_n$ is a predictable process taking values in the set of orthogonal matrices and each $f_n\in L^2[0,1]$ is  deterministic taking values in the set of symmetric positive semi-definite matrices.  The martingale $X$ equals the sum $\sum_n Y^n$ where the sum converges in $L^2$.  Furthermore $\Vert X_1\Vert^2=\sum_n\Vert Y^n_1\Vert^2$. The process defined by $O_n(t)\,dB_t$ is a $d-$dimensional Brownian Motion.
\end{theorem}
\begin{proof} The proof is a repetition of the proof of the main result of section 3. Let $B$ be defined on $\Omega$ endowed with the usual filtration $(\Fc_t)_t$ generated by $B$ and equipped with the probability $\Pr$. The space $E=[0,1]\times \Omega$ is endowed with the restriction of the product measure $d\mu=dt\times d\Pr$ to the sigma algebra of predictable events, denoted here by $\Ec$. The sigma algebra $\Hc$ is generated by the mapping $(t,u)\rightarrow t$. By the Kunita-Watanabe representation theorem we can represent the martingale $X$ by its stochastic integral: $dX_t= A_t\,dB_t$ where $A$ is a predictable process taking values in the space of $n\times n$ matrices. We have $\Vert X\Vert^2=\int_{[0,1]\times \Omega}d\mu \Vert A\Vert^2$. We  can now apply the lemma above and get $A=\sum_n f_n\,O_n$, $f_n(t)$ are deterministic and symmetric positive semi-definite and $O_n$ are predictable taking values in the set of orthogonal matrices. The sum is convergent in $L^2\([0,1]\times \Omega\)$ and $\int_{[0,1]\times\Omega} \Vert A\Vert^2=\sum_n\int \Vert f_n\Vert^2$. If we take for $Y^n$ the stochastic integral $\int f_n\,O_n\,dB$ we get the desired sequence.
\end{proof}
In the same way as for the one dimensional case one can prove
\begin{corollary} Let $(\Omega,\Fc,\Pr)$ be an atomless probability space and let $\xi\in L^2$ be a $d-$dimensional random variable.  In case $\Er[\xi]=0$ there is a sequence of $d-$dimensional Gaussian variables $\psi_n$  such that $\xi=\sum_n \psi_n$ where the sum converges in $L^2$ and $\Vert \xi \Vert^2=\sum_n \Vert \psi_n\Vert^2$.
\end{corollary}


\begin{thebibliography}{99}


\bibitem{1} P.Carr, H. Geman, D. Madan and M. Yor. Self-decomposable laws and associated processes. {\em Mathematical Finance, {\bf 17}, 31--57, (2007)}


\bibitem{D3}  F. Delbaen: Conditionally Atomless Extensions of Sigma Algebras, \emph{arxiv 2003.09254, (2020) }

\bibitem{D2} F. Delbaen: Commonotonicity and Time Consistency for Lebesgue Continuous Monetary Utility Functions, \emph{Finance and Stochastics}, 25, 597--614, 2021

\bibitem{D5} F. Delbaen and C. Majumdar: Convergence Results for Approximation with independent Variables, \emph{short version http://arxiv.org/abs/2405.19780; final version submitted to Frontiers in Mathematical Finance}

\bibitem{Doob} J. Doob: Stochastic Processes, Wiley, New York, 1953

\bibitem{Millar} P. W. Millar: Martingales with independent increments, \emph{Ann. Math. Stat. {\bf 40}, 1033--1041, 1969}

\bibitem{RY} D. Revuz and M. Yor: Continuous Martingales and Brownian Motion, 3rd edition, Springer, Berlin, 1999

\end{thebibliography}
\end{document}